\DeclareMathOperator{\Z}{\mathbb{Z}}
\theoremstyle{plain}
\newtheorem{thm}{Theorem}
\theoremstyle{definition}
\newtheorem*{defn}{Definition}
\newtheorem{lemma}[thm]{Lemma}
\newtheorem{conj}[thm]{Conjecture}
\begin{document}
\title{On pairs of consecutive sequences with the same radicals}
\author{Noah Lebowitz-Lockard \\
Samueli School of Engineering, \\
University of California, Irvine, \\
5200 Engineering Hall, Irvine, CA, 92697 \\
nlebowit@uci.edu}
\maketitle

\begin{abstract} Let $(m, n, k)$ be a tuple of integers with the property that if $i \leq k$, then $m + i$ and $n + i$ have the same radical. Using a result on the abc Conjecture, we bound $k$ from above, improving a result of Balasubramanian, Shorey, and Waldschmidt. We also bound the number of pairs $(m, n)$ for which $m < n \leq x$ and $m(m + 1) \cdots (m + k - 1))$ and $n(n + 1) \cdots (n + \ell - 1)$ have the same radical and the number of pairs for which $m + i$ and $n + i$ have the same radical for all $i < k$.
\end{abstract}

\section{Introduction}

We define the \emph{radical} of a number as its largest squarefree divisor. In 1963, Erd{\H o}s observed that if $m = 2^k - 2$ and $n = 2^k (2^k - 2)$, then $\textrm{rad} (m) = \textrm{rad} (n)$ and $\textrm{rad} (m + 1) = \textrm{rad} (n + 1)$. He also asked if there were any other examples of this phenomenon \cite[Probl{\` e}me 60]{E1}. Five years later, Makowski \cite{M} found that $(m, n) = (75, 1215)$ is also a solution. As of this writing, no other solutions are known. (See also \cite{E3}, \cite{W} and \cite[\S B19]{G}.)

In light of this fact, numerous authors have considered a weaker statement. Given $m$ and $n$, can we find an upper bound on the largest number $k$ for which $\textrm{rad} (m + i) = \textrm{rad} (n + i)$ for all $i \leq k$? Erd{\H o}s conjectured that there is a fixed constant $K$ such that for all $(m, n)$, there exists a positive integer $i < K$ with $\textrm{rad} (m + i) \neq \textrm{rad} (n + i)$. Woods \cite{Wo} made the stronger conjecture that if $k$ is sufficiently large, then the sequence $\textrm{rad} (n), \textrm{rad} (n + 1), \ldots, \textrm{rad} (n + k - 1)$ uniquely determines $n$. Guy \cite[\S 29]{G} mentions that $k = 3$ might work with at most finitely many exceptions.

Shorey and Tijdeman \cite{ST} obtained a conditional result. The abc Conjecture states that for all $\epsilon > 0$, there are only finitely many pairwise coprime triples $(a, b, c)$ with $a + b = c$ satisfying $\textrm{rad} (abc)^{1 + \epsilon} < c$. Baker \cite{B} provided a heuristic argument for a stronger form of this conjecture.

\begin{conj} Let $\omega = \omega(abc)$. If $a$, $b$, and $c$ satisfy the conditions of the abc Conjecture, then
\[\textrm{rad} (abc) < (6/5) \textrm{rad} (abc) (\log \textrm{rad} (abc))^\omega/(\omega !).\]
\end{conj}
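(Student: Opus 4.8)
This statement is Baker's explicit refinement of the abc Conjecture, and it is exactly that — a conjecture, here adopted as a working hypothesis for the paper's estimates rather than something to be established. (In the displayed inequality the left-hand side should read $c$, as in the usual form of Baker's conjecture.) So there is no proof to give in the ordinary sense; what one can offer, and what I would present, is the \emph{heuristic} explaining why a bound of precisely this shape is expected.

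Here is how I would organize that heuristic, following Baker. Fix the intended radical $N$ and write $\omega=\omega(N)$. The basic count is $\Psi^{*}(X,N)$, the number of $n\le X$ all of whose prime factors divide $N$: writing $N=p_{1}\cdots p_{\omega}$, these $n$ correspond to the lattice points $(e_{1},\dots,e_{\omega})$, $e_{i}\ge 0$, lying in the simplex $\sum_{i}e_{i}\log p_{i}\le\log X$, whence
\[ \Psi^{*}(X,N)\sim \frac{(\log X)^{\omega}}{\omega!\,\prod_{p\mid N}\log p}. \]
In an abc triple with $\textrm{rad}(abc)=N$ and $a,b,c$ pairwise coprime, the $\omega$ primes of $N$ are split among $a,b,c$, each of which is one of these $N$-composed integers, with $c$ the largest; I would model the additive constraint probabilistically, taking the event "$b=c-a$ is $N$-composed" to occur with probability equal to the density $\Psi^{*}(c,N)/c$ of $N$-composed integers below $c$, independently as the $N$-composed $a<c$ varies. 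Summing over such $a$, then over $N$-composed $c\le X$, then over squarefree $N\le X$ with $\omega$ prime factors (using Landau's count for the last), and imposing that the resulting expectation be of order one at the extreme, pins the threshold $c$ at order $N(\log N)^{\omega}/\omega!$; the argument does not produce the absolute constant, and Baker calibrates it to $6/5$ against the numerically extremal triples, notably Reyssat's $2+3^{10}\cdot 109=23^{5}$, which the $6/5$-bound fits only barely.

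The main obstacle is that this is not, and cannot readily be turned into, a proof. The abc Conjecture itself is open, so even the far weaker $c\ll_{\epsilon}\textrm{rad}(abc)^{1+\epsilon}$ is out of reach, let alone the sharp exponent and constant above; and the independence assumption — that "$c-a$ is $N$-composed" behaves like an event of the expected density, uniformly in $a$ — is least defensible precisely at the extreme, where by construction only $O(1)$ triples are predicted, so the bound is governed by the tail of the distribution rather than its mean and the constant is correspondingly delicate, with the lower-order terms in $\Psi^{*}$ and the exact combinatorics of splitting the $\omega$ primes among $a,b,c$ feeding into it as well. Accordingly the statement is used here only as a hypothesis, and no proof is attempted; the most one could realistically hope to prove rigorously is the abc Conjecture, which remains open.
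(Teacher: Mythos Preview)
Your assessment is correct: the statement is a conjecture, not a theorem, and the paper does not attempt to prove it. The paper simply records it as Baker's strengthening of the abc Conjecture, cites \cite{B} for the heuristic, and then uses it only as a hypothesis in the subsequent Theorem~2 (the Shorey--Tijdeman result). Your observation that the left-hand side of the displayed inequality is a typo for $c$ is also right.

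Where you go beyond the paper is in sketching Baker's heuristic itself --- the simplex count for $\Psi^{*}(X,N)$, the probabilistic model for $c-a$ being $N$-composed, and the calibration of the constant $6/5$ against extremal triples such as Reyssat's. The paper does none of this; it just points to \cite{B}. So your exposition is not a ``different route'' to a proof but rather supplementary background the paper omits. That background is accurate and well organized, but for the purpose of matching the paper's own treatment, the one-line answer ``this is a conjecture; no proof is given or expected'' already suffices.
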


Shorey and Tijdeman answered our problem assuming this result. (Chim, Nair, and Shorey discuss further implications of the previous conjecture in \cite{CNS} and the following result in Section $3.4$ of their paper.)

\begin{thm} If the previous conjecture holds, there are no distinct integers $m$ and $n$ for which $\textrm{rad} (m + i) = \textrm{rad} (n + i)$ for all $i \in \{0, 1, 2\}$.
\end{thm}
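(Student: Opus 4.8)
The plan is to argue by contradiction. Suppose $m$ and $n$ are positive integers with $m < n$ and $\textrm{rad}(m + i) = \textrm{rad}(n + i)$ for every $i \in \{0, 1, 2\}$; I will derive an impossible inequality relating $m$ and $n$. The first and main structural step turns the hypothesis into a divisibility. Put $R = \textrm{rad}(n(n+1)(n+2))$ and $\omega = \omega(n(n+1)(n+2))$. For any prime $p \mid R$ we have $p \mid n + i$ for some $i \in \{0,1,2\}$, hence $p \mid \textrm{rad}(n+i) = \textrm{rad}(m+i)$ and so $p \mid m + i$; therefore $p \mid (n+i) - (m+i) = n - m$. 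Since $R$ is squarefree this gives $R \mid n - m$, and in particular $R \leq n - m$. Note also that $R = \textrm{rad}(m(m+1)(m+2))$ and $\omega = \omega(m(m+1)(m+2))$ as well (the radical and $\omega$ of a product depend only on the underlying set of primes, and that set is the same for the two triple products), and that $R \geq 6$ because $6 \mid n(n+1)(n+2)$.

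Next, apply the conjecture above (read with its evidently intended conclusion $c < (6/5)\,\textrm{rad}(abc)(\log \textrm{rad}(abc))^{\omega}/\omega!$) to the triple arising from the identity $n(n+2) + 1 = (n+1)^2$. Here $(a,b,c) = (n(n+2),\,1,\,(n+1)^2)$ is pairwise coprime since $\gcd(n,n+1) = \gcd(n+2,n+1) = 1$, and $\textrm{rad}(abc) = \textrm{rad}(n(n+1)(n+2)) = R$ with $\omega(abc) = \omega$, so
\[(n+1)^2 < \frac{6}{5}\,R\,\frac{(\log R)^{\omega}}{\omega!}.\]
To eliminate $\omega$, bound $(\log R)^{\omega}/\omega! \leq \max_{k \geq 0}(\log R)^k/k!$ and use the Stirling-type estimate $\max_{k \geq 0} x^k/k! \leq e^x/\sqrt{x}$, valid for $x \geq 1$, which applies since $\log R \geq \log 6 > 1$; this yields $(n+1)^2 < (6/5)\,R^2/\sqrt{\log R}$. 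Finally, from $R \leq n - m$ and $\sqrt{\log R} \geq \sqrt{\log 6} > 6/5$ we obtain $(n+1)^2 < (n-m)^2$, hence $n+1 < n - m$, that is $m < -1$, contradicting $m \geq 1$.

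I do not expect a serious obstacle: the whole argument rests on the divisibility $R \mid n-m$ of the first step --- which is precisely what couples the two sequences --- together with the standard device of feeding $n(n+2)+1 = (n+1)^2$ into the abc machinery. The one point that needs care is the estimate for $(\log R)^{\omega}/\omega!$: the trivial bound $\leq e^{\log R} = R$ loses a factor $\sqrt{\log R}$ and does not close the argument for moderate $n$, so one genuinely needs both the $1/\sqrt{\log R}$ gained from Stirling and the explicit constant $6/5$ in the conjecture; recording $R \geq 6$ is exactly what keeps the Stirling bound inside its range of validity.
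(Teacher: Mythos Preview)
The paper does not supply its own proof of this statement; it is quoted from Shorey and Tijdeman \cite{ST} (see also the discussion attributed to \cite{CNS}), so there is no in-paper argument to compare against.

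Your proof is correct and follows the standard route. The divisibility $R \mid n-m$ is exactly the mechanism the paper itself exploits later in its proof of Theorem~\ref{main}, and feeding the identity $n(n+2)+1=(n+1)^2$ into the abc machinery is the natural choice of triple here. Your treatment of the awkward factor $(\log R)^\omega/\omega!$ is the point where some care is genuinely needed, and you handle it well: the crude bound $\sum_k x^k/k!=e^x$ would only give $(n+1)^2<(6/5)R^2$, which is not a contradiction, so the extra $1/\sqrt{\log R}$ from the Stirling-type estimate together with $R\geq 6$ (hence $\sqrt{\log R}>6/5$) is exactly what closes the gap. You were also right to flag the evident typo in the conjecture as printed (the left side should be $c$, not $\textrm{rad}(abc)$); Baker's formulation is indeed $c<(6/5)\,\textrm{rad}(abc)(\log\textrm{rad}(abc))^\omega/\omega!$. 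One cosmetic point: the theorem says ``distinct integers,'' but since radicals are only sensible for positive integers your assumption $m,n\geq 1$ is harmless.
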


Langevin \cite{L} previously proved that simply assuming the abc Conjecture implies that there are at most finitely many exceptions. Unconditionally, much less is known about this problem. Balasubramanian, Shorey, and Waldschmidt \cite{Bal+} obtained the following result.

\begin{thm} \label{main thm} If $(m, n, k)$ is a triple of positive integers with $x < y$ and $\textrm{rad} (m + i) = \textrm{rad} (n + i)$ for all $i \leq k$, then
\[k = \exp(O(\sqrt{\log m \log \log m})).\]
\end{thm}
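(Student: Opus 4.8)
The plan is to convert the hypothesis into a statement about consecutive integers built from a fixed set of primes, and then count. Put $d = n - m \geq 1$. For each $i \leq k$, the identity $\textrm{rad}(m+i) = \textrm{rad}(n+i)$ forces every prime dividing $m+i$ to divide $n+i$, so $\textrm{rad}(m+i) \mid \gcd(m+i,n+i) \mid (n+i)-(m+i) = d$. Hence $\textrm{rad}(m+i)\mid d$ for every $i$, and taking least common multiples, $\textrm{rad}\!\left(\prod_{i=0}^{k}(m+i)\right)\mid d$. Two consequences follow. First, each of $m, m+1,\dots,m+k$ (and, symmetrically, each of $n,\dots,n+k$) has all of its prime factors among the primes dividing $d$, so $\omega(m+i)\leq\omega(d)$ for all $i$. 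Second, since $(k+1)!$ divides $\prod_{i=0}^{k}(m+i)=\binom{m+k}{k+1}(k+1)!$, every prime $p\leq k+1$ divides $d$, whence $d\geq\prod_{p\leq k+1}p=\exp(\vartheta(k+1))\gg e^{ck}$ for an absolute $c>0$; in particular $k\ll\log d$. The task is thereby reduced to bounding the length of a block of consecutive integers near $m$, all of whose prime factors divide a single integer $d$, given in addition that the translate $\{m+d+i\}_{i\le k}$ realises the same sequence of radicals.

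To bound this length, for each squarefree $r$ let $c_r=\#\{i\in[0,k]:\textrm{rad}(m+i)=r\}$, so $k+1=\sum_r c_r$. An index counted by $c_r$ produces an $r$-smooth multiple of $r$ in $[m,m+k]$ and, simultaneously, an $r$-smooth multiple of $r$ in $[n,n+k]$. From the first interval one gets the elementary bound $c_r\leq\min\!\big(\lfloor k/r\rfloor+1,\ (1+\log(m+k))^{\omega(r)}\big)$, which is strong precisely when $r$ is large — and if $\omega(r)=t$ then $r\geq p_1\cdots p_t=\exp\big((1+o(1))t\log t\big)$, so radicals with many prime factors are automatically large. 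The second interval is where one uses that $n\geq d\gg e^{ck}$ is huge: the integers near $n$ composed only of the primes of $r$ are extremely sparse, and by Tijdeman's theorem on gaps between integers built from a fixed set of primes (an application of Baker's lower bounds for linear forms in logarithms) consecutive such integers near a point $y$ are spaced by more than $y/(\log y)^{\kappa(S)}$, where $\kappa(S)$ depends only on the prime set $S$; hence an interval of length $k$ around $n$ contains at most one $r$-smooth multiple of $r$, i.e. $c_r\leq 1$, as soon as $\omega(r)$ is small relative to $\log n/\log\log n$. Partitioning $[0,k]$ according to $t=\omega(\textrm{rad}(m+i))$ — using the $c_r\le\lfloor k/r\rfloor+1$ bound with $r$ primorial-large for large $t$, and the smooth-count bound together with the $n$-side information for the remaining $t$ — and optimising the threshold between the two regimes, one should arrive at $\log k\ll\sqrt{\log m\,\log\log m}$, that is, $k=\exp\!\big(O(\sqrt{\log m\,\log\log m}\,)\big)$.

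The heart of the proof, and the step I expect to be the main obstacle, is showing that no radical recurs too often: the only leverage is that the companion integers $n+i$ must squeeze into the sparse set of $r$-smooth numbers lying near the a priori enormous integer $n$, and turning ``sparse'' into an effective gap bound is exactly where the transcendence machinery is needed. Because $\kappa(S)$ deteriorates as $|S|$ grows, this forces a case analysis according to the size of $\omega(d)$, and it is the careful choice of the cut-off $t$ — balancing the primorial lower bound $\exp(t\log t)$ against a quantity of size $\exp\big(O(t\log\log m)\big)$ — that produces the exponent $\sqrt{\log m\log\log m}$ rather than something larger. By comparison, the divisibility reduction of the first paragraph and the bound $d\gg e^{ck}$ are routine. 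A last point, disposed of separately and easily, is the range of very large $k$ (say $k>m^{1/2}$): there $(m,m+k]$ contains a prime, forcing $d>m$, and then the same bookkeeping — now with all of $m,\dots,m+k$ non-squarefree and lying in a short interval — finishes the argument directly.
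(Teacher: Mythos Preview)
The paper does not prove this statement: Theorem~\ref{main thm} is quoted from Balasubramanian--Shorey--Waldschmidt \cite{Bal+}, and the paper's own contribution is the sharper Theorem~\ref{main}, proved via the Stewart--Yu bound toward $abc$ rather than via linear forms in logarithms. So there is no in-paper proof to compare against. Your decision to reach for Baker/Tijdeman is nonetheless the right instinct and matches the method of the original \cite{Bal+}.

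Your specific counting scheme, however, has a real gap. Writing $k+1=\sum_r c_r$ and then arguing $c_r\le 1$ (via Tijdeman on the $n$-side for small $\omega(r)$, or via $c_r\le\lfloor k/r\rfloor+1$ when $r$ is primorial-large) yields only $k+1\le\#\{r:c_r\ge 1\}$, the number of \emph{distinct} radicals that occur --- and nothing in the proposal bounds that quantity, so the inequality is vacuous. The argument that actually works does not partition by radicals. As in the paper's proof of Theorem~\ref{main}, one uses $\textrm{rad}\bigl(\prod_{i\le k}(m+i)\bigr)\mid(n-m)$ together with a product/pigeonhole step to locate a \emph{single} adjacent pair $m+j,\,m+j+1$ whose set of prime divisors is small; then Baker's lower bound for linear forms in logarithms, applied to $(m+j+1)-(m+j)=1$, bounds $m$ directly in terms of that prime count, and balancing the prime count against the Baker constant is what produces the exponent $\sqrt{\log m\log\log m}$. (Your closing remark on $k>m^{1/2}$ is also off: a prime $p\in(m,m+k]$ gives $p\mid d$ and hence $d>m$, but the assertion that all of $m,\dots,m+k$ are then non-squarefree does not follow from anything you have established.)
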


In this note, we improve this result and obtain the following bound on $k$. Note that while Balasubramanian et al.'s bound depends on $m$, ours depends on $n$.

\begin{thm} \label{main} We have $k \ll (\log n)^{3/2}/(\log \log n)^{9/2}$.
\end{thm}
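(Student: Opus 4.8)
The plan is to convert the radical condition into a divisibility statement about $\prod_{i\le k}(m+i)$ and then read it against the size of $n$. First I would set $d=n-m\ge 1$. For each $i\le k$ the equality $\textrm{rad}(m+i)=\textrm{rad}(n+i)$ says that $m+i$ and $n+i$ have the \emph{same} prime support, so every prime $p\mid m+i$ also divides $n+i$ and hence divides $(n+i)-(m+i)=d$; thus $\textrm{rad}(m+i)\mid d$ for all $i\le k$. Since $\textrm{rad}\bigl(\prod_{i=0}^{k}(m+i)\bigr)$ is just the product of the primes that occur among $m,\dots,m+k$, it equals $\mathrm{lcm}\bigl(\textrm{rad}(m),\dots,\textrm{rad}(m+k)\bigr)$, and hence it too divides $d$.

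Next I would bound this radical from below by a prime count. Among any $k+1$ consecutive positive integers, every prime $p\le k+1$ divides at least one of them, so $\prod_{p\le k+1}p$ divides $\prod_{i=0}^{k}(m+i)$, hence divides its radical, hence divides $d$. Therefore
\[
\prod_{p\le k+1}p\ \le\ d\ =\ n-m\ <\ n ,
\]
so that $\sum_{p\le k+1}\log p<\log n$; since $\sum_{p\le x}\log p\gg x$ by Chebyshev's estimate, this already gives $k\ll\log n$, which is stronger than the asserted $k\ll(\log n)^{3/2}/(\log\log n)^{9/2}$.

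I do not expect a real obstacle: all the content sits in the first step, the observation that equality of the \emph{squarefree} numbers $\textrm{rad}(m+i)$ and $\textrm{rad}(n+i)$ forces the \emph{entire} prime factorisation of $m+i$ into $n-m$, so that $m,\dots,m+k$ becomes a run of consecutive integers supported on the (few) primes dividing $n-m$; everything afterwards is routine. In particular I would not need the abc-type input flagged in the introduction. If one does insist on passing through a Stewart--Yu-style estimate $\log c\ll \textrm{rad}(abc)^{1/3}(\log\textrm{rad}(abc))^{3}$ — for instance applied to $\frac{m+i}{g}+\frac{d}{g}=\frac{n+i}{g}$ with $g=\gcd(m+i,d)$, where $\textrm{rad}((m+i)(n+i)d)=\textrm{rad}(d)\le d<n$ — then the exponents one ends up with will reflect the strength of that partial result (presumably the source of the $3/2$ and $9/2$ in the statement), but that route is strictly weaker here than the elementary count. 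The two points worth checking are uniformity in $m$, which is automatic since every inequality above involves only $d<n$, and the small-$n$ ranges, which are absorbed into the implied constant.
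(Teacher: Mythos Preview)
Your argument is correct, and it is genuinely different from the paper's. Both proofs begin with the same observation: since $\textrm{rad}(m+i)=\textrm{rad}(n+i)$, every prime dividing any $m+i$ (equivalently any $n+i$) divides $d=n-m$, so $\textrm{rad}\bigl(\prod_{i=0}^{k}(n+i)\bigr)\le d<n$. From this point the paper passes to a pair $(n+j,n+j+1)$ with small combined radical and feeds it into the Stewart--Yu partial result toward abc, obtaining $k\ll(\log n)^{3/2}/(\log\log n)^{9/2}$. You instead note that every prime $p\le k+1$ divides one of the $k+1$ consecutive integers $m,\dots,m+k$, hence divides $d$, so $\prod_{p\le k+1}p\le d<n$; Chebyshev's estimate $\theta(x)\gg x$ then gives $k\ll\log n$ directly.

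What each approach buys: the paper's route illustrates how the quality of the bound tracks the strength of the available abc-type theorem (and the paper remarks that the argument collapses with the older Stewart--Yu exponent $2/3$). Your route is purely elementary, avoids Stewart--Yu entirely, and in fact yields the sharper inequality $k\ll\log n$, which for large $n$ is stronger than the theorem as stated since $(\log\log n)^{9}=o(\log n)$. It is worth noting that the paper's own first paragraph already contains the inequality $\textrm{rad}\bigl(n(n+1)\cdots(n+k)\bigr)<n$ from which your primorial bound follows in one line; the detour through Stewart--Yu is not needed for the stated conclusion.
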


We also consider a related problem. Erd{\H o}s \cite{E2} asks for the number of pairs of integers $(m, n)$ where $m < n \leq x$ and $\textrm{rad} (m(m + 1)) = \textrm{rad} (n(n + 1))$. We provide an upper bound on this quantity. As far as I am aware, this is the first recorded result on this problem.

\begin{thm} \label{F thm} For a given pair of integers $(k, \ell)$, let $F_{k, \ell} (x)$ be the number of pairs $(m, n)$ with $m < n \leq x$ and
\[\textrm{rad} (m(m + 1) \cdots (m + k - 1)) = \textrm{rad} (n(n + 1) \cdots (n + \ell - 1)).\]
For all $k, \ell > 1$, we have
\[F_{k, \ell} (x) \leq x \exp\left((\ell \log 2 + o(1)) \frac{\log x}{\log \log x}\right)\]
as $x \to \infty$.
\end{thm}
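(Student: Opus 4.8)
The plan is to fix $n$, to record the set of primes $S_n = \{\,p : p \mid n(n+1)\cdots(n+\ell-1)\,\}$ occurring in the second product, and to count the admissible $m$ using only $|S_n|$. If $\textrm{rad}(m(m+1)\cdots(m+k-1)) = \textrm{rad}(n(n+1)\cdots(n+\ell-1))$ then every prime dividing $m(m+1)\cdots(m+k-1)$ lies in $S_n$, and in particular (using $k > 1$) the consecutive integers $m$ and $m+1$ are both \emph{$S_n$-smooth}, meaning all of their prime factors lie in $S_n$. Writing $G(x;S)$ for the number of $m \le x$ such that $m$ and $m+1$ are both $S$-smooth, we therefore have
\[ F_{k,\ell}(x) \;\le\; \sum_{n \le x} G(x; S_n), \]
and it remains to bound $|S_n|$ from above and $G(x;S)$ in terms of $|S|$ and $x$.

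For the bound on $|S_n|$, note that $n(n+1)\cdots(n+\ell-1) \le (x+\ell)^{\ell}$, so its logarithm is at most $\ell\log x + o(1)$ for fixed $\ell$. By the maximal order of $\omega$ — an integer $N$ with $r$ distinct prime factors satisfies $N \ge 2\cdot 3\cdots p_r = e^{\theta(p_r)}$, whence $r \le (1+o(1))\log N/\log\log N$ by the prime number theorem — we obtain, uniformly for $n \le x$,
\[ |S_n| \;=\; \omega\bigl(n(n+1)\cdots(n+\ell-1)\bigr) \;\le\; (\ell + o(1))\,\frac{\log x}{\log\log x}. \]

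For the bound on $G(x;S)$ I use Störmer's classical observation. If $m$ and $m+1$ are both $S$-smooth, write $m(m+1) = D s^{2}$ with $D$ squarefree; then $D$ divides $\textrm{rad}(m(m+1))$, hence $D$ is a product of a subset of $S$, so there are at most $2^{|S|}$ possible values of $D$. Moreover $(2m+1)^{2} - D(2s)^{2} = (2m+1)^2 - 4m(m+1) = 1$, so $(2m+1, 2s)$ is a positive solution of the Pell equation $X^{2} - DY^{2} = 1$. Every positive solution of this equation is a power of the fundamental one, so if the solutions are listed in increasing order then the $j$-th has $X$-coordinate $\ge 2^{\,j-2}$; hence at most $O(\log x)$ solutions have $X \le 2x+1$. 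As each $m \le x$ with $m, m+1$ both $S$-smooth yields such a solution with $X = 2m+1$, and distinct $m$ give distinct $X$, we conclude
\[ G(x;S) \;\le\; 2^{|S|}\cdot O(\log x). \]
Substituting the two bounds into $F_{k,\ell}(x) \le \sum_{n\le x} G(x;S_n)$ gives $F_{k,\ell}(x) \ll x\log x\cdot 2^{(\ell+o(1))\log x/\log\log x}$, and since $\log x = \exp(O(\log\log x)) = \exp(o(\log x/\log\log x))$, this is $x\exp\bigl((\ell\log 2 + o(1))\log x/\log\log x\bigr)$, as desired.

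The step I expect to be most delicate is getting the constant $\ell\log 2$ exactly right in the bound for $G(x;S)$: one must avoid estimating $G(x;S)$ by the full count of $S$-smooth integers up to $x$ (which is of size $\binom{\lfloor\log_2 x\rfloor + |S|}{|S|}$ and introduces a spurious extra factor of order $\log\log\log x$ in the exponent), and instead use the Pell structure of consecutive smooth integers, so that the only genuinely exponential loss is the $2^{|S|}$ coming from the squarefree part $D$. A secondary point to verify carefully is that the bound on $|S_n|$ is uniform in $n \le x$, so that a single error term $o(1)$ survives the summation over all $n \le x$.
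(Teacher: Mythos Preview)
Your proposal is correct and follows essentially the same route as the paper: fix $n$, observe that $\textrm{rad}(m(m+1))$ divides $Q = n(n+1)\cdots(n+\ell-1)$, bound the number of such $m$ by $O(2^{\omega(Q)}\log x)$ via the St\"ormer--Lehmer Pell-equation argument, bound $\omega(Q)$ by $(\ell+o(1))\log x/\log\log x$, and sum over the $\lfloor x\rfloor$ choices of $n$. The only cosmetic difference is that you prove the Pell bound $G(x;S)\le 2^{|S|}\cdot O(\log x)$ inline, whereas the paper isolates it as a lemma citing Lehmer.
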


We also prove a variant of this result more closely related to our previous theorems.

\begin{thm} \label{pairs thm} Fix a positive integer $k$. The number of pairs $(m, n)$ with $m < n \leq x$, $\textrm{rad} (m + i) = \textrm{rad} (n + i)$ for all $i \in [0, k - 1]$ is at most
\[x^{1/k} \exp\left((C_k + o(1)) \frac{\log x}{\log \log x}\right)\]
where
\[C_k = \left\{\begin{array}{ll}
2/k, & \textrm{if } k \textrm{ is even,} \\
2/(k - 1), & \textrm{if } k \textrm{ is odd.}
\end{array}\right.\]
\end{thm}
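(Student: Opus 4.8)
The plan is to first extract rigid divisibility structure from the hypothesis and then reduce the count to two ingredients: a bound on integers with a prescribed small radical, and a bound on solutions of an $S$-unit equation.

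\textit{Step 1 (structure).} Fix a valid pair $(m,n)$ and set $D=n-m<x$. If a prime $p$ divides $m+i$ then $p\mid\textrm{rad}(m+i)=\textrm{rad}(n+i)$, so $p\mid n+i$ and hence $p\mid D$; likewise every prime dividing $n+i$ divides $D$. Thus each of $m,\dots,m+k-1,n,\dots,n+k-1$ is supported on the primes of $D$. Two consequences follow. First, $\textrm{rad}(m+i)\mid D$ for each $i$, and $\gcd(\textrm{rad}(m+i),\textrm{rad}(m+j))\mid j-i$, so the least common multiple of the $\textrm{rad}(m+i)$ divides $D$ and therefore
\[\prod_{i=0}^{k-1}\textrm{rad}(m+i)\;\le\;c_k D\;<\;c_k x,\qquad c_k:=\prod_{0\le i<j\le k-1}(j-i).\]
Second, writing $h(a)=a/\textrm{rad}(a)\in\Z_{\ge1}$, from $m+i<n+i$ and $\textrm{rad}(m+i)=\textrm{rad}(n+i)$ we get $h(n+i)>h(m+i)\ge1$, so no $n+i$ is squarefree.

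\textit{Step 2 (the factor $x^{1/k}$).} By the displayed inequality there is an index $i_0$ with $\textrm{rad}(m+i_0)\le y:=(c_kx)^{1/k}$, so $m+i_0$ lies in the set $\{N\le x+k:\textrm{rad}(N)\le y\}$, and $m$ is one of the $k$ integers $m+i_0,m+i_0-1,\dots,m+i_0-k+1$. Writing $N=\textrm{rad}(N)\,h(N)$ with $r:=\textrm{rad}(N)$ squarefree of size at most $y$ and $h(N)$ supported on the primes of $r$, the number of admissible $N$ is $\sum_{r\le y}\Psi_r(x/r)$ with $\Psi_r(t)\le\prod_{p\mid r}(1+\log t/\log p)$; since every $r\le y=x^{1/k+o(1)}$ has $\omega(r)\ll\log x/(k\log\log x)$, each summand is $x^{o(1)}$ and the sum is dominated by the squarefree values $r$ themselves, giving at most $x^{1/k}\exp\!\big(o(\tfrac{\log x}{\log\log x})\big)$ possibilities for $m$.

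\textit{Step 3 (counting $n$ for fixed $m$; the parity of $k$).} Once $m$ is fixed, the radicals $r_i=\textrm{rad}(n+i)=\textrm{rad}(m+i)$ are determined. Group the indices into $\lfloor k/2\rfloor$ pairs $\{0,1\},\{2,3\},\dots$, discarding the last index when $k$ is odd — this is exactly why $\lfloor k/2\rfloor$, and not $k/2$, appears. For the $j$-th pair put $s_j=r_{2j}r_{2j+1}$, which equals $\textrm{rad}\big((m+2j)(m+2j+1)\big)$ because consecutive integers are coprime; then $\prod_j s_j\le\prod_i r_i<c_kx$, so some pair $j^\ast$ has $s:=s_{j^\ast}<(c_kx)^{1/\lfloor k/2\rfloor}$. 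For that pair $n+2j^\ast$ and $n+2j^\ast+1$ are two consecutive integers both supported on the primes of $s$, i.e.\ a solution of the $S$-unit equation $a+1=b$ with $S$ the set of primes of $s$; the number of such solutions is bounded purely in terms of $\omega(s)$, and one needs this bound in the sharp form $\exp\!\big((1+o(1))\omega(s)\big)$. As $s<x^{1/\lfloor k/2\rfloor+o(1)}$ we have $\omega(s)\le(1+o(1))\tfrac{\log x}{\lfloor k/2\rfloor\log\log x}$, so $n+2j^\ast$, hence $n$, takes at most $\exp\!\big((C_k+o(1))\tfrac{\log x}{\log\log x}\big)$ values with $C_k=1/\lfloor k/2\rfloor$, that is $2/k$ for even $k$ and $2/(k-1)$ for odd $k$. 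Multiplying the counts of Steps 2 and 3 yields the theorem.

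\textit{Where the difficulty lies.} Step 1 is elementary. The work is in the two quantitative inputs and, above all, in the bookkeeping of constants: the estimate for the number of integers up to $x$ with radical at most $y$ (Step 2), which requires a uniform smooth-number-type bound, and — the crux — the count of $S$-unit solutions of $a+1=b$ with dependence on $|S|$ good enough that substituting $\omega(s)\ll\log x/(\lfloor k/2\rfloor\log\log x)$ recovers the stated $C_k$ rather than merely $O(1/\lfloor k/2\rfloor)$. One must also check that the contributions of Steps 2 and 3 genuinely multiply without compounding losses, and that no valid pair is overlooked in the degenerate cases where $m+i_0$ is small or $s_{j^\ast}$ has unusually few prime factors.
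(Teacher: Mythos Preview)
Your overall strategy matches the paper's: show $\prod_i \textrm{rad}(m+i)\ll x$, extract the factor $x^{1/k}$ from one coordinate having small radical, and count the other coordinate via the consecutive-$S$-unit equation $a+1=b$, pairing indices to produce the parity-dependent constant $C_k=1/\lfloor k/2\rfloor$. The paper runs the same two-step count with the roles of $m$ and $n$ reversed (it fixes $n$ first, then counts $m$), and for the $S$-unit step it quotes Lehmer's reduction to $2^{\omega(Q)}$ Pell equations each with $O(\log x)$ solutions; this is exactly your Step~3 input, with the slightly better constant $\log 2$ in place of your claimed $1$.

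There is, however, a genuine gap in Step~2. Your bound
\[
\Psi_r(x/r)\ \le\ \prod_{p\mid r}\Big(1+\frac{\log x}{\log p}\Big)\ \le\ (C\log x)^{\omega(r)}
\]
combined with $\omega(r)\ll \log x/(k\log\log x)$ gives only
\[
(C\log x)^{\omega(r)}\ \le\ \exp\!\Big((1+o(1))\,\frac{\log x}{k\log\log x}\cdot\log\log x\Big)\ =\ x^{(1+o(1))/k},
\]
not $x^{o(1)}$ as you assert. Summing this over the at most $x^{1/k}$ squarefree $r\le y$ then yields only $N(x,y)\le x^{2/k+o(1)}$, which would spoil the leading power in the statement. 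The conclusion you want, $N(x,x^{1/k})=x^{1/k}\exp\!\big(O(\sqrt{\log x/\log\log x})\big)$, is true but genuinely deeper; the paper obtains it by citing the Robert--Tenenbaum theorem on the count of integers with bounded radical. Once that result replaces your elementary estimate, the rest of your argument goes through unchanged.
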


\section{Consecutive strings with equal radicals}

In this section, we bound the largest $k$ for which $\textrm{rad}(m + i) = \textrm{rad}(n + i)$ for all $i \leq k$ for some distinct $m, n \leq x$. Rather than using the abc Conjecture, we use the best current result on the problem.

\begin{thm}[{\cite{StY2}}] There exists a positive constant $C$ such that if $(a, b, c)$ is a pairwise coprime triplet with $a + b = c$, then
\[c < \exp(C \textrm{rad}(abc)^{1/3} (\log \textrm{rad} (abc))^3).\]
\end{thm}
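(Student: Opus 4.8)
The plan is to derive the inequality from the theory of linear forms in logarithms, used in both its archimedean (Baker--W\"ustholz) and its $p$-adic (Yu Kunrui) incarnations; this is the route going back to Stewart and Tijdeman, and the sharp exponent $1/3$ emerges from inserting Yu's best current $p$-adic estimates and optimizing carefully. Throughout write $N=\textrm{rad}(abc)$, and note that pairwise coprimality gives $N=\textrm{rad}(a)\,\textrm{rad}(b)\,\textrm{rad}(c)$.

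The first step is to manufacture linear forms from $a+b=c$. Write $a=\prod_i p_i^{\alpha_i}$ and $b=\prod_j q_j^{\beta_j}$; every $p_i$ and $q_j$ divides $N$. For a prime $p\mid c$ one has $a\equiv -b\pmod{p^{v_p(c)}}$ because $p\nmid ab$, so the nonzero rational $\Lambda=\frac{a}{-b}-1=\frac{-c}{b}$ satisfies $v_p(\Lambda)=v_p(c)$: it is a ``linear form in the logarithms'' of the rationals $p_i,q_j$, whose heights are at most $N$ and whose exponents are at most $(\log c)/\log 2$ in absolute value. The symmetry of $a+b=c$ produces the analogous forms $\frac{-c}{a}$, $\frac{-a}{c}$, $\frac{-b}{c}$ detecting $v_p(a)$ for $p\mid a$ and $v_p(b)$ for $p\mid b$. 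These same expressions are also ordinary real numbers, e.g.\ $|\Lambda|=c/b$, which lies near $1$ exactly when $b$ is close to $c$; that is where the archimedean estimate has teeth.

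The second step is to feed these forms into the two transcendence estimates, together with the book-keeping facts $\omega(M)\ll \log M/\log\log M$ (so any constant of shape $C^{\omega(M)}$ or $\omega(M)^{O(\omega(M))}$ is $M^{o(1)}$) and $\prod_{p\mid M}\log p=M^{o(1)}$ (the product is largest for primorial $M$, where it is $\exp(O(\log M\cdot\log\log\log M/\log\log M))$). Yu's $p$-adic bound gives, for $p\mid c$,
\[ v_p(c)=v_p(\Lambda)\ll_{\omega(ab)}\frac{p}{(\log p)^{2}}\Bigl(\prod_{q\mid ab}\log q\Bigr)\log(e\log c), \]
and symmetrically for $p\mid a$ and $p\mid b$; the Baker--W\"ustholz bound applied to $\frac{a}{c}-1$ gives $-\log(b/c)\ll_{\omega(ac)}\bigl(\prod_{q\mid ac}\log q\bigr)\log\log c$, hence $\log c\le \log b+N^{o(1)}\log\log c$, and likewise $\log c\le\log a+N^{o(1)}\log\log c$ from $\frac{b}{c}-1$. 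Consequently $\log c\ll N^{o(1)}$ unless both $a$ and $b$ exceed, say, $c^{1/2}$.

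This leaves the case where $a,b,c$ are all of comparable size, so that only the $p$-adic estimate is effective --- and here lies the main obstacle: the factor $p/(\log p)^2$ in Yu's bound is of size $N^{1}$ when $p$ is as large as $N$, so using it bluntly yields only the worthless $\log c\ll N^{1+o(1)}$. To reach exponent $1/3$ I would split the primes dividing $abc$ at a threshold near $N^{1/3}$: those exceeding $N^{1/3}$ have product at most $N$, hence number at most two, each dividing a single one of $a,b,c$ and leaving a complementary radical below $N^{2/3}$, so their contribution can be controlled via $a+b=c$ and the trivial bound $v_p(\cdot)\le \log c/\log p$; the primes below $N^{1/3}$ contribute at most $\omega(c)\cdot N^{1/3}\cdot N^{o(1)}$ after summing Yu's estimate. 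Balancing the two ranges, and --- crucially --- replacing the crude $N^{o(1)}$ losses above by the finer information Yu's estimate actually supplies for $\prod_{q\mid ab}\log q$ together with its explicit constants, so that what comes out is the polylogarithmic $N^{1/3}(\log N)^3$ rather than merely $N^{1/3+o(1)}$, is the delicate optimization; this is precisely what \cite{StY2} carries out.
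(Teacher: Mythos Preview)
The paper does not prove this theorem at all: it is quoted verbatim as a result of Stewart and Yu \cite{StY2} and then invoked as a black box in the proof of Theorem~\ref{main}. There is therefore no ``paper's own proof'' to compare your proposal against.

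For what it is worth, your sketch does follow the genuine Stewart--Yu strategy (linear forms in $p$-adic logarithms via Yu's estimates, combined with an archimedean estimate to handle the case where one of $a,b$ is small, and a splitting of the primes dividing $abc$ at a threshold to tame the $p/(\log p)^2$ factor). But it remains a sketch: you yourself defer the decisive optimization --- extracting $(\log N)^3$ rather than $N^{o(1)}$ --- to \cite{StY2}, so the proposal is not a self-contained proof. In the context of the present paper none of this is needed; the theorem is simply cited.
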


We now prove Theorem \ref{main}.

\begin{proof} Suppose $m$ and $n$ are positive integers with $m < n$. Additionally, suppose that $\textrm{rad} (m + i) = \textrm{rad} (n + i)$ for all $i \leq k$ for some $k$. We bound $k$ from above. We first observe that $m + i \equiv n + i \textrm{ mod rad} (n + i)$ for all $i \leq k$ because $m + i$ and $n + i$ share the same prime factors. Therefore, $m \equiv n \textrm{ mod rad} (n + i)$ for all $i \leq k$. Hence,
\[m \equiv n \textrm{ mod lcm} (\textrm{rad} (n), \textrm{rad} (n + 1), \ldots, \textrm{rad} (n + k)),\]
which in turn implies that
\[m \equiv n \textrm{ mod rad} (n(n + 1) \cdots (n + k)).\]
Because $0 < m < n$, we also have $\textrm{rad} (n(n + 1) \cdots (n + k)) < n$. Additionally,
\[\textrm{rad} (n) \textrm{rad} (n + 1) \cdots \textrm{rad} (n + k) < (2^{k/2} 3^{k/3} 5^{k/5} \cdots P^{k/P}) \textrm{rad} (n(n + 1) \cdots (n + k)),\]
where $P$ is the largest prime $\leq k$. Hence,
\[\textrm{rad} (n) \cdots \textrm{rad} (n + k) \leq e^{k \log k + O(k)} n.\]

There exists some $m < k$ such that $\textrm{rad} (n + m) \textrm{rad} (n + m + 1) \leq (e^{k \log k + O(k)} n)^{2/k} \ll k^2 n^{2/k}$. From here, we can apply the previous theorem. Let $(a, b, c) = (1, n + m, n + m + 1)$. Observe that $c > n$. However, $\textrm{rad}(abc) \ll k^2 n^{2/k}$. Applying our previous theorem gives us
\[n \leq \exp(C_1 k^{2/3} n^{2/(3k)} (\log (k^{2/3} n^{2/k}))^3)\]
for some constant $C_1$. From here we obtain
\[n \ll \exp(C_2 k^{2/3} n^{2/(3k)} ((\log k)^3 + ((\log n)/k)^3))\]
for some $C_2$. If $k \sim C_3 (\log n)^3/(\log \log n)^{9/2}$ for a certain constant $C_3$, then this inequality does not hold. Therefore, if $\textrm{rad} (m + i) = \textrm{rad} (n + i)$ for all $i \leq k$, then $k < C_3 (\log n)^3/(\log \log n)^{9/2}$.
\end{proof}

One notable fact about this proof is that it only works with the most recent bound for the abc Conjecture. (Specifically, an argument of this type only holds if we know that $c < \exp((\textrm{rad} (abc)^{1/2 + o(1)})$.) The previous bound was $c < \exp(\textrm{rad} (abc))^{(2/3) + o(1)})$ \cite{StY1}. Running through the previous argument with this result gives us
\[n \ll \exp(C_4 k^{4/3} n^{2/(3k)}),\]
which holds for all values of $k$.

\section{Pairs with the same radical}

In this section, we prove Theorem \ref{pairs thm}. To do so, we make use of the following result about radicals.

\begin{lemma} \label{lehmer lemma} Fix two integers $Q$ and $x$. The number of $n \leq x$ with $\textrm{rad} (n(n + 1)) | Q$ is at most on the order of $2^{\omega(Q)} \log x$.
\end{lemma}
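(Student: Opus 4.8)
The plan is to reduce the count to a Thue–Mahler / Pell-type equation over a fixed, finite set of primes. Suppose $n \le x$ satisfies $\textrm{rad}(n(n+1)) \mid Q$, and write $S$ for the set of primes dividing $Q$, so $|S| = \omega(Q)$. Since $\gcd(n, n+1) = 1$, each prime of $S$ divides exactly one of $n$, $n+1$. This induces a partition $S = S_0 \sqcup S_1$, where $S_0$ is the set of primes dividing $n$ and $S_1$ the set dividing $n+1$; there are $2^{\omega(Q)}$ choices of such a partition. It therefore suffices to show that for each fixed partition, the number of $n \le x$ with every prime factor of $n$ in $S_0$ and every prime factor of $n+1$ in $S_1$ is $O(\log x)$, with an absolute implied constant (independent of $Q$, $S_0$, $S_1$).

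First I would set $a$ equal to the product of the primes in $S_0$ and $b$ the product of the primes in $S_1$, so we are counting $n \le x$ that are $a$-smooth-supported with $n+1$ being $b$-smooth-supported — equivalently, solutions of $u - v = 1$ with $u \mid a^\infty$, $v \mid b^\infty$, $u \le x$. The key step is to bound the number of such solutions using the structure of the group of $S$-units: writing $u = \prod_{p \in S_0} p^{e_p}$, the exponent vector $(e_p)$ lies in a bounded region, but crucially the equation $u - v = 1$ forces strong rigidity. The cleanest route is to invoke a gap principle: if $u_1 < u_2$ are two solutions with $u_i \mid a^\infty$ and $u_i + 1 \mid b^\infty$, then $u_2 / u_1$ and $(u_2+1)/(u_1+1)$ are both $S$-units, and from $u_2(u_1+1) - u_1(u_2+1) = u_2 - u_1$ one extracts that consecutive solutions grow at least geometrically with an absolute ratio (say $\ge 2$, after discarding $O(1)$ small solutions). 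That immediately gives $O(\log x)$ solutions below $x$ for each partition, and summing over the $2^{\omega(Q)}$ partitions yields the claim.

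The main obstacle I anticipate is making the gap principle genuinely uniform — i.e., getting an absolute multiplicative gap between consecutive solutions that does not degrade as $\omega(Q)$ grows. A naive argument controlling one solution at a time via bounds on $S$-unit equations (Evertse-type) gives a count depending on $\omega(Q)$, which is too weak; the point is that the \emph{ordering} of solutions is what saves us, not their total number. I would handle this by working with ratios directly: if $1 < u_1 < u_2$ are solutions, then $u_2 \equiv -1 \pmod{u_1 + 1}$ wait — more simply, from $u_1 \mid a^\infty$ and the coprimality $\gcd(u_1, u_1 + 1) = 1$, the values $u_1$ and $u_2$ lie in the same coset structure and one shows $u_2 \ge 2 u_1$ by a direct divisibility/size argument, so that between $x^{1/2}$-sized and $x$-sized solutions there are $O(1)$ of them and recursion gives $O(\log x)$. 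A secondary technical point is cleanly disposing of the finitely many "small" solutions (those below an absolute constant) so they do not contribute a $Q$-dependent error; this is routine since there are $O(1)$ of them per partition, contributing $O(2^{\omega(Q)})$, which is absorbed into the stated bound.
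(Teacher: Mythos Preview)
Your decomposition into $2^{\omega(Q)}$ partitions $(S_0,S_1)$ is natural, but the per--partition bound you aim for is actually false, so the whole strategy breaks down. Take $S_0=\{2,3\}$ and let $S_1$ consist of all the remaining primes up to $x+1$. Then any $n=2^a3^b\le x$ with $a\ge 1$ and $n\not\equiv 2\pmod 3$ has $n+1$ coprime to $6$, hence automatically $S_1$--supported; there are $\asymp(\log x)^2$ such $n$. So for this single partition the count is $\Theta((\log x)^2)$, not $O(\log x)$, and no ``gap principle'' with an absolute ratio can hold here: the integers $2^a3^b$ simply do not grow geometrically. Your sketched argument via $u_2(u_1+1)-u_1(u_2+1)=u_2-u_1$ does not force $u_2\ge 2u_1$, and the fallback ``direct divisibility/size argument'' is never supplied; the counterexample shows none exists at this level of generality.

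The paper's proof uses a \emph{different} decomposition that does give a uniform $O(\log x)$ per case. Instead of splitting the primes of $Q$ between $n$ and $n+1$, one records the squarefree kernel $D$ of $n(n+1)$; there are at most $2^{\omega(Q)}$ possibilities for $D$. Writing $N=2n+1$ one has $N^2-1=4n(n+1)$, so each admissible $n$ yields a solution of a Pell equation $N^2-D'y^2=1$ attached to $D$ (this is Lehmer's treatment of St\o rmer's problem). The crucial point is that consecutive Pell solutions satisfy $N_{j+1}\ge 2N_j$ \emph{uniformly in $D'$}, because $N_{j+1}=N_1N_j+D'y_1y_j\ge N_1N_j\ge 2N_j$; this is exactly the absolute geometric growth your argument was missing. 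Hence each of the $2^{\omega(Q)}$ Pell equations contributes $O(\log x)$ values of $n$, and summing gives the lemma. If you want to rescue your write--up, replace the $(S_0,S_1)$ partition by the choice of $D$ and invoke the Pell recursion for the gap.
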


\begin{proof} By \cite[Thm. 1]{Le}, $\textrm{rad} (n(n + 1))$ can only be a divisor of $Q$ if $n$ has the form $(N - 1)/2$ where $N$ is a solution to one of $2^{\omega(Q)}$ Pell equations. Because the solutions of Pell equations grow exponentially, each equation can only have $O(\log x)$ solutions with $N \leq x$.
\end{proof}

\begin{thm} Fix a positive integer $k$. The number of pairs $(m, n)$ with $m < n \leq x$, $\textrm{rad} (m + i) = \textrm{rad} (n + i)$ for all $i \in [0, k - 1]$ is at most
\[\sqrt[k]{x} \exp\left((C_k + o(1)) \frac{\log x}{\log \log x}\right)\]
with
\[C_k = \left\{\begin{array}{ll}
2/k, & \textrm{ if } k \textrm{ is even,} \\
2/(k - 1), & \textrm{ if } k \textrm{ is odd.}.
\end{array}\right.\]
\end{thm}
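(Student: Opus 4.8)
The plan is to bound separately the number of integers $n$ that can be the larger member of such a pair, and, for each such $n$, the number of admissible partners $m$. Write $Q(\nu)=\textrm{rad}(\nu(\nu+1)\cdots(\nu+k-1))$. As in the proof of Theorem \ref{main}, if $(m,n)$ is a pair of the required type then $\textrm{rad}(m+i)=\textrm{rad}(n+i)$ for all $i$ forces $Q(m)=Q(n)=:Q$ and $Q\mid n-m$, so (as $0<n-m<n\le x$) $Q<n$. A prime exceeding $k$ divides at most one of $n,\ldots,n+k-1$, whence $\prod_{i=0}^{k-1}\textrm{rad}(n+i)\le D_k\,Q<D_k\,n$ with $D_k=\prod_{p\le k}p^{\lceil k/p\rceil-1}$ depending only on $k$. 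If every $\textrm{rad}(n+i)$ were at least $(D_kn)^{1/k}$ the product would be $\ge D_kn$; so some $i\in\{0,\ldots,k-1\}$ has $\textrm{rad}(n+i)<(D_kn)^{1/k}\le Cx^{1/k}$ with $C=C(k)$. Hence every admissible $n$ lies in $\bigcup_{i=0}^{k-1}\{\,r-i: r\le x+k,\ \textrm{rad}(r)<Cx^{1/k}\,\}$, and the number of admissible $n$ is at most $k\,G(x+k,Cx^{1/k})$, where $G(y,z)=\#\{m\le y:\textrm{rad}(m)\le z\}$.

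Next I would establish $G(y,z)\le z\exp(o(\log y/\log\log y))$ when $\log z\asymp\log y$: writing $m=rm'$ with $r=\textrm{rad}(m)$ and $\textrm{rad}(m')\mid r$ gives $\#\{m\le y:\textrm{rad}(m)=r\}\le\binom{\lfloor\log_2 y\rfloor+\omega(r)}{\omega(r)}$, since $m'$ is a product of the $\omega(r)$ primes of $r$ with total exponent $\le\log_2 y$; summing over squarefree $r\le z$, using $\binom{L+t}{t}\le(2L)^t/t!$ with $L=\lfloor\log_2 y\rfloor$ and the Hardy--Ramanujan estimate $\#\{r\le z:\omega(r)=t\}\ll z(\log\log z)^{t-1}/((t-1)!\,\log z)$, yields a series that is $O(z\,e^{2\sqrt{2L\log\log z}}/\log\log z)=z\exp(o(\log y/\log\log y))$ because $\sqrt{\log y\,\log\log y}=o(\log y/\log\log y)$. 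With $y=x+k$ and $z=Cx^{1/k}$ this bounds the number of admissible $n$ by $x^{1/k}\exp(o(\log x/\log\log x))$.

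For a fixed admissible $n$, set $Q_j=\textrm{rad}((n+2j)(n+2j+1))=\textrm{rad}(n+2j)\,\textrm{rad}(n+2j+1)$ for $0\le j<\lfloor k/2\rfloor$; since $\prod_j Q_j=\prod_{i=0}^{2\lfloor k/2\rfloor-1}\textrm{rad}(n+i)\le D_kQ<D_kx$, some $j_0$ has $Q_{j_0}<(D_kx)^{1/\lfloor k/2\rfloor}$. Any admissible partner $m$ of $n$ has $\textrm{rad}(m+2j_0)=\textrm{rad}(n+2j_0)$ and $\textrm{rad}(m+2j_0+1)=\textrm{rad}(n+2j_0+1)$, hence $\textrm{rad}((m+2j_0)(m+2j_0+1))\mid Q_{j_0}$, so $m+2j_0$ lies in the set of Lemma \ref{lehmer lemma} for $Q_{j_0}$, of size $\ll 2^{\omega(Q_{j_0})}\log x$. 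As $Q_{j_0}\le x^{1/\lfloor k/2\rfloor+o(1)}$, the maximal order of $\omega$ gives $\omega(Q_{j_0})\le(1+o(1))\log x/(\lfloor k/2\rfloor\log\log x)$, so the number of admissible $m$ for this $n$ is at most $\exp\big((\tfrac{\log 2}{\lfloor k/2\rfloor}+o(1))\tfrac{\log x}{\log\log x}\big)$. Multiplying by the count of admissible $n$, and using $\log 2<1$ so that $\tfrac{\log 2}{\lfloor k/2\rfloor}\le C_k$, gives the stated bound. The crux is the sharp exponent $1/k$ for the admissible $n$: crude pigeonhole ("two of the $\textrm{rad}(n+i)$ are small") gives only $x^{2/k}$, and one must use that a single $\textrm{rad}(n+i)$ drops below the $k$-th root, together with the binomial bound for $G$ rather than the wasteful $\#\{m':\textrm{rad}(m')\mid r\}\le(\log y)^{\omega(r)}$.
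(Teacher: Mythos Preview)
Your argument is correct and follows the same two-step decomposition as the paper: first bound the number of admissible $n$ via the observation that some $\textrm{rad}(n+i)\ll x^{1/k}$, then for each such $n$ bound the partners $m$ via Lemma~\ref{lehmer lemma} applied to a pair $(n+2j)(n+2j+1)$ with small radical. The one substantive difference is how you control $G(y,z)=\#\{m\le y:\textrm{rad}(m)\le z\}$: the paper simply cites the Robert--Tenenbaum asymptotic $N(x,y)\sim yF(v)$ with $\log F(v)\sim 2\sqrt{2v/\log v}$, whereas you supply a self-contained estimate via $\#\{m\le y:\textrm{rad}(m)=r\}\le\binom{\lfloor\log_2 y\rfloor+\omega(r)}{\omega(r)}$ and the Hardy--Ramanujan bound on $\#\{r\le z:\omega(r)=t\}$. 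Both routes give $G(x,x^{1/k})=x^{1/k}\exp\bigl(O(\sqrt{\log x\,\log\log x})\bigr)$, which is $x^{1/k}\exp(o(\log x/\log\log x))$ and hence is swallowed by the main term; your derivation has the advantage of being elementary and avoiding an external reference, while the paper's citation gives a sharper (asymptotically exact) count that is not actually needed here. You also make explicit that the constant produced is $(\log 2)/\lfloor k/2\rfloor<C_k$, which the paper leaves implicit.
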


\begin{proof} Suppose $m < n \leq x$ with $\textrm{rad} (m + i) = \textrm{rad} (n + i)$ for all $i \in [0, k - 1]$. Additionally, assume that $n > x/2$. We bound the number of such pairs from above. First, we bound the number of possible values of $n$. Then we bound the number of $m$ corresponding to a given $n$.

An argument similar to the one from the start of the previous proof implies that
\[\textrm{rad} (n(n + 1) \cdots (n + k - 1)) \leq x.\]
Therefore, $\textrm{rad} (n + i) < Ckx^{1/k}$ for some positive constant $C$. Let $N(x, y)$ be the number of numbers $\leq x$ with radical $\leq y$. A theorem of Robert and Tenenbaum \cite[p. 208]{Te} states that if $\log y \geq (1 + o(1)) 2^{-3/2} (\log x)^{1/2} (\log \log x)^{3/2}$, then $N(x, y) \sim yF(v)$, where $v = \log(x/y)$ and $\log F(v) \sim 2\sqrt{2 v/\log v}$. Plugging in $y = \sqrt[k]{x}$ gives us $v = (1 - (1/k))(\log x)$ and
\[N(x, Ckx^{1/k}) = x^{1/k} \exp\left((2 + o(1)) \sqrt{2\left(1 - \frac{1}{k}\right)\frac{\log x}{\log \log x}}\right).\]
The number of possible $n \leq x$ with $\min(\textrm{rad} (n), \textrm{rad} (n + 1), \ldots, \textrm{rad} (n + k - 1)) \leq Ckx^{1/k}$ is at most $kN(x, Ckx^{1/k})$.

We now bound the number of possible $m$ corresponding to a given value of $n$. If $k$ is even, then there exists some $i < k - 1$ such that $\textrm{rad} ((n + i)(n + i + 1)) \ll k^2 x^{2/k}$. If $k$ is odd, we have
\[\textrm{rad} ((n + i)(n + i + 1)) \ll k^2 x^{2/(k - 1)}.\]
Without loss of generality, we may assume that $i = 0$. In this case, $\textrm{rad} (m(m + 1)) | \textrm{rad} (n(n + 1))$ because $\textrm{rad} (m(m + 1)) = \textrm{rad} (n(n + 1))$. The previous lemma implies that there are at most $2^{\omega(n(n + 1))} \log x$ possible values of $m$. Because $\textrm{rad} (n(n + 1)) \leq x^{1/k}$, $\omega(n(n + 1)) \ll C_k \log x/\log \log x$. Multiplying $2^{\omega(n(n + 1))} \log x$ by $N(x, Ckx^{1/k})$ gives us our desired result.
\end{proof}

\section{On a question of Erd{\H o}s}

Recall the question that Erd{\H o}s asked from the introduction. How many pairs of numbers $(m, n)$ are there with $m < n \leq x$ and $\textrm{rad} (m(m + 1)) = \textrm{rad} (n(n + 1))$? In this section, we provide an upper bound for this quantity.

\begin{defn} Fix a positive integer $k$. We let $F_{k, \ell} (x)$ be the number of pairs $(m, n) \in \Z^2$ with $m < n \leq x$ satisfying
\[\textrm{rad} (m(m + 1) \cdots (m + k - 1)) = \textrm{rad} (n(n + 1) \cdots (n + \ell - 1)).\]
\end{defn}

We now prove Theorem \ref{F thm}, which we rewrite below. Note that our bound only depends on $\ell$.

\begin{thm} For all $k, \ell > 1$, we have
\[F_{k, \ell} (x) \leq x\exp\left((\ell \log 2 + o(1)) \frac{\log x}{\log \log x}\right).\]
\end{thm}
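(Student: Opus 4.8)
The plan is to bound $F_{k,\ell}(x)$ by first counting the admissible values of $n$ and then, for each such $n$, counting the admissible values of $m$. As in the proof of the previous theorem, I may assume $n > x/2$ (absorbing a factor of $O(\log x)$, or just summing dyadically). The starting observation is the same congruence argument: if $\textrm{rad}(m(m+1)\cdots(m+k-1)) = \textrm{rad}(n(n+1)\cdots(n+\ell-1))$, then for each $j \in [0,\ell-1]$ every prime dividing $n+j$ divides the common radical $R$, hence divides $m(m+1)\cdots(m+k-1)$; but I want something about $m$ directly, so I will instead run the argument the other way. Since $m < n \le x$ and $m(m+1)\cdots(m+k-1)$ is divisible by $R = \textrm{rad}(m(m+1)\cdots(m+k-1))$, and the $k$ factors $m+i$ are pairwise coprime up to small common factors (gcd of $m+i$ and $m+j$ divides $|i-j| < k$), one gets that $R \gg (m(m+1)\cdots(m+k-1))^{1/k}/k^{O(1)} \gg m / k^{O(1)}$ when the factors are genuinely coprime; more carefully, $\textrm{rad}(m(m+1)\cdots(m+k-1)) \ge m^{1-o(1)}$ is false in general, so the right move is: $R \le x$ trivially is useless, but $R$ divides nothing forcing it small. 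Let me reconsider: the key size input must come from the $n$ side.

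So the real structure is: $R = \textrm{rad}(n(n+1)\cdots(n+\ell-1)) = \textrm{rad}(m(m+1)\cdots(m+k-1))$. From the $\ell$-fold product on the $n$ side, $R \mid n(n+1)\cdots(n+\ell-1)$, and since this product is at most $x^\ell$ we only get $R \le x^\ell$, which is too weak. The sharper point, exactly mirroring Section 2, is that $n \equiv m \pmod{\textrm{rad}(n+j)}$ need not hold here since we don't have termwise equality — only equality of the big radical. Hence the leverage is purely counting-theoretic: the number of $n \le x$ for which there even \emph{exists} an $m<n$ with the same big radical. For a fixed $n$, write $R = \textrm{rad}(n(n+1)\cdots(n+\ell-1))$; the admissible $m$ are exactly those with $\textrm{rad}(m(m+1)\cdots(m+k-1)) = R$, so in particular $\textrm{rad}(m(m+1)) \mid R$, and Lemma \ref{lehmer lemma} gives at most $2^{\omega(R)}\log x$ such $m$. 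Thus $F_{k,\ell}(x) \le \sum_{n \le x} 2^{\omega(\textrm{rad}(n(n+1)\cdots(n+\ell-1)))}\log x$, but this is hopeless without restricting $n$. The restriction comes from: for such a pair to exist at all we need $\textrm{rad}(n(n+1)\cdots(n+\ell-1)) = \textrm{rad}(m(m+1)\cdots(m+k-1))$, and choosing among the $m+i$ two coprime consecutive ones shows $\textrm{rad}(m(m+1)) \mid R$ with $m \asymp n$ (since $n>x/2$ forces, via the Pell bound, $m$ in a short-ish range) — wait, the cleanest path: split on the size of $\textrm{rad}(n(n+1)\cdots(n+\ell-1))$ itself. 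If $R \le y$ then the number of such $n$ is at most $\ell \cdot N(x^\ell, y)$-ish; if $R > y$ then $\omega(R)$ could be large but $2^{\omega(R)}$ is controlled by $R^{o(1)}$... no, $2^{\omega(R)}$ is \emph{not} $R^{o(1)}$ uniformly. The standard fix is that $2^{\omega(R)} \le d(R) \le R^{o(1)}$ holds for all $R$ except we lose the ability to bound it by $\exp(O(\log x/\log\log x))$ precisely when $\omega(R)$ is maximal, i.e., $\omega(R) \asymp \log x/\log\log x$, which forces $R \ge \exp((1-o(1))\log x) = x^{1-o(1)}$, impossible if $R \le x^{\ell}/$ something. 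Hmm — here $R$ can be as large as $\approx x^\ell$ so $\omega(R)$ up to $\asymp \ell\log x/\log\log x$, giving $2^{\omega(R)} \le \exp((\ell\log 2 + o(1))\log x/\log\log x)$. That is exactly the claimed exponent.

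So the proof I would write: (1) reduce to $n > x/2$ by dyadic decomposition, losing only $O(\log x) = \exp(o(1)\log x/\log\log x)$; (2) for each such $n$, note $\textrm{rad}(m(m+1)) \mid \textrm{rad}(n(n+1)\cdots(n+\ell-1)) =: R$, so by Lemma \ref{lehmer lemma} the number of valid $m$ is $\ll 2^{\omega(R)}\log x$; (3) bound $\omega(R) \le \omega(n(n+1)\cdots(n+\ell-1)) \le \sum_{j=0}^{\ell-1}\omega(n+j) \le (\ell + o(1))\log x/\log\log x$ using the standard maximal-order bound $\omega(t) \le (1+o(1))\log t/\log\log t$ for $t \le x$ and the trivial $\omega(n+j) \le \log(n+j)/\log 2$... actually the sharp uniform bound $\omega(t) \le (1+o(1))\log t/\log\log t$ does hold for all $t$, so $\omega(n+j) \le (1+o(1))\log x/\log\log x$ for each $j$, summing to $(\ell+o(1))\log x/\log\log x$; (4) hence $2^{\omega(R)} \le \exp((\ell\log 2 + o(1))\log x/\log\log x)$, uniformly in $n$; (5) sum over the at most $x$ values of $n$, obtaining $F_{k,\ell}(x) \le x \cdot \exp((\ell\log 2 + o(1))\log x/\log\log x) \cdot \log x$, and absorb the $\log x$ into the $o(1)$.

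The main obstacle — and the only place care is needed — is step (3)/(4): one must verify that the bound $\omega(t) \le (1+o(1))\log t/\log\log t$ is genuinely uniform over $t \le x$ (it is: the extremal $t$ is a primorial $p_1p_2\cdots p_r$, and this gives precisely $r \sim \log t/\log\log t$ by the prime number theorem), and that summing $\ell$ such bounds is legitimate even though the $n+j$ are correlated — it is, since we only use the trivial union bound $\omega(\prod(n+j)) \le \sum \omega(n+j)$, no independence required. One should also double-check that the factor $k$ (the length on the $m$-side) truly drops out: it does, because once we know $\textrm{rad}(m(m+1)) \mid R$, Lemma \ref{lehmer lemma} counts $m$ with no reference to $k$, and the existence of two coprime consecutive terms $m, m+1$ among $m, \ldots, m+k-1$ is automatic. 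This is why the final bound depends only on $\ell$.
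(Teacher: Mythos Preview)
Your proposal is correct and follows essentially the same approach as the paper: fix $n \le x$, observe that $\textrm{rad}(m(m+1)) \mid R := \textrm{rad}(n(n+1)\cdots(n+\ell-1))$, apply the Pell-equation lemma to get at most $2^{\omega(R)}\log x$ admissible $m$, bound $\omega(R) \le \sum_{j<\ell}\omega(n+j) \le (\ell+o(1))\log x/\log\log x$ via the maximal order of $\omega$, and sum over the $\le x$ values of $n$. Your dyadic reduction to $n > x/2$ is unnecessary here (the paper simply sums over all $n \le x$ and the $\omega$-bound already holds uniformly), but it is harmless.
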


\begin{proof} Fix $n \leq x$. Suppose $m < n$ with $\textrm{rad} (m(m + 1) \cdots (m + k - 1)) = \textrm{rad} (n(n + 1) \cdots (n + \ell - 1))$. Let $Q = n(n + 1) \cdots (n + k - 1)$. By assumption, $\textrm{rad} (m(m + 1)) | Q$. Lemma \ref{lehmer lemma} implies that there are at most $2^{\omega(Q)} \log x$ values of $m$ satisfying this property. Additionally, every number $\leq x$ has at most $(1 + o(1)) \log x/\log \log x$ distinct prime factors. Therefore, $\omega(Q) \lesssim (k + o(1)) \log x/\log \log x$ as $x \to \infty$. The fact that there are $\lfloor x \rfloor$ choices for $n$ gives us our result.
\end{proof}

Given that this property should be quite rare, one would expect a much smaller upper bound. At present, I do not see a way of even getting $x^{1 - \epsilon}$. One would also expect that $F_{k, \ell} (x)$ \emph{decreases} with $\ell$ because we are placing restrictions on more numbers. We close with a conjecture about the size of $F_{k, \ell} (x)$.

\begin{conj} If $k, \ell > 1$ and $(k, \ell) \neq (2, 2)$, then the equation
\[\textrm{rad} (m(m + 1) \cdots (m + k - 1)) = \textrm{rad} (n(n + 1) \cdots (n + \ell - 1))\] 
only has finitely many solutions.
\end{conj}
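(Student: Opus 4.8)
The plan is to convert the equality of radicals into a combination of a smoothness condition and an abc-type lower bound, and then to try to bound the larger parameter $n$. Write $M = m(m+1)\cdots(m+k-1)$, $N = n(n+1)\cdots(n+\ell-1)$ and $R = \textrm{rad}(M) = \textrm{rad}(N)$, and assume $m < n$. Since every prime dividing $N$ also divides $M$, each prime factor of $R$ is at most $m + k - 1$; hence every factor $n + j$ is $(m+k-1)$-smooth. A solution thus forces $\ell$ consecutive integers near $n$ to be built entirely from the (relatively few) primes below $m+k-1$, a highly restrictive condition, which I would try to exploit through counts of consecutive smooth numbers.

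Next I would extract quantitative content from abc as in Section 2. For $\ell \ge 3$, applying the inequality to the coprime triple $((n+i)(n+i+2),\,1,\,(n+i+1)^2)$ gives $\textrm{rad}(N) \gg_\epsilon n^{2-\epsilon}$, while for $\ell = 2$ the triple $(n,\,1,\,n+1)$ yields only $\textrm{rad}(N) \gg_\epsilon n^{1-\epsilon}$; the same estimates applied to $M$ bound $R$ from below in terms of $m$. The intended contradiction is to play these lower bounds against the trivial upper bounds $R \le M \ll m^k$ and $R \le N \ll n^\ell$. The first step is to record $n \ll m^{k/c_\ell + \epsilon}$ (with $c_\ell = 2$ for $\ell \ge 3$ and $c_\ell = 1$ for $\ell = 2$), and symmetrically $m \ll n^{\ell/c_k+\epsilon}$, and then to feed the resulting smoothness constraint into an estimate for how often $\ell$ consecutive $(m+k-1)$-smooth integers can occur below $n$.

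The step I expect to be the main obstacle — and the heart of the matter — is the regime in which $N$ is far more powerful than $M$, i.e. $\ell$ small relative to $k$, and most sharply $\ell = k - 1$. Here the paired inequalities above are mutually consistent for all admissible exponents: one only obtains $n \ll m^{k/c_\ell}$ and $m \ll n^{\ell/c_k}$, which never conflict, so pure size and abc information cannot force $n$ to be bounded. That this gap is genuine rather than an artifact of the method is already visible for $(k,\ell) = (3,2)$, where Erd{\H o}s's family $m = 2^t - 2$, $n = 2^t(2^t - 2)$ has $m + 2 = 2^t$ contributing no new prime, so $\textrm{rad}(m(m+1)(m+2)) = \textrm{rad}(m(m+1)) = \textrm{rad}(n(n+1))$ for every $t$. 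Closing the argument for the statement exactly as worded therefore requires controlling precisely such configurations for every $(k,\ell) \neq (2,2)$, and I expect this — not the smoothness or abc bookkeeping — to be the decisive difficulty.
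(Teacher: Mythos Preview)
The paper does not prove this statement: it is the closing open conjecture, offered without argument. There is therefore no proof in the paper to compare your attempt against.

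More to the point, your own observation about $(k,\ell)=(3,2)$ is not a ``decisive difficulty'' for the strategy---it is a counterexample to the conjecture as worded. With $m=2^{t}-2$ and $n=2^{t}(2^{t}-2)$ one has $m+2=2^{t}$, so the extra factor contributes only the prime $2$, which already divides $m$; hence
\[
\textrm{rad}\bigl(m(m+1)(m+2)\bigr)=\textrm{rad}\bigl(m(m+1)\bigr)=\textrm{rad}\bigl(n(n+1)\bigr)
\]
for every $t\ge 2$. This is an infinite family of solutions for a pair $(k,\ell)$ with $k,\ell>1$ and $(k,\ell)\neq(2,2)$, so the statement is false as stated and no proof can exist. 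The correct conclusion from your analysis is not that these configurations must be ``controlled'' but that the conjecture needs to be amended (for instance by restricting to $k=\ell$, or by excluding further small pairs) before any proof attempt makes sense.
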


\end{document}